\newtheorem{theorem}{Theorem}[section]
\newtheorem{lemma}[theorem]{Lemma}
\newtheorem{proposition}[theorem]{Proposition}
\theoremstyle{definition}
\newtheorem{convention}[theorem]{Convention}
\theoremstyle{remark}
\newtheorem{remark}[theorem]{Remark}
\def\Q{\mathbb{Q}}
\def\Z{\mathbb{Z}}
\def\HFred{\operatorname{HF_{red}}}
\def\conn {\mathbin{\#}}
\def\spincs {\mathfrak{s}}
\def\spinct {\mathfrak{t}}
\def\RP{\mathbb{R} \mathrm{P}}
\definecolor{darkgreen}{rgb}{0,0.5,0}
\definecolor{purple}{rgb}{0.5,0,0.5}
\newcommand{\abs}[1] {\left\lvert #1 \right\rvert}
\newcommand{\gen}[1] {\langle #1 \rangle}
\def\ul {\underline}
\title{Simply-connected, spineless 4-manifolds}
\subjclass[2013]{}
\author[Adam Simon Levine]{Adam Simon Levine}
\thanks{The first author was partially supported by NSF grant DMS-1707795.}
\address{Department of Mathematics, Duke University, Durham, NC 27708}
\email{alevine@math.duke.edu}
\author[Tye Lidman]{Tye Lidman}
\thanks{The second author was partially supported by NSF grant DMS-1709702.}
\address{Department of Mathematics, North Carolina State University, Raleigh, NC 27607}
\email{tlid@math.ncsu.edu}
\numberwithin{equation}{section}
\begin{document}

\maketitle

\begin{abstract}
We construct infinitely many smooth $4$-manifolds which are homotopy equivalent to $S^2$ but do not admit a spine, i.e., a piecewise-linear embedding of $S^2$ which realizes the homotopy equivalence.  This is the remaining case in the existence problem for codimension-$2$ spines in simply-connected manifolds.  The obstruction comes from the Heegaard Floer $d$ invariants.
\end{abstract}

\section{Introduction}

Given an $m$-dimensional, piecewise-linear, compact manifold $M$ which is homotopy equivalent to some closed manifold $N$ of dimension $n<m$, a \emph{spine} of $M$ is a piecewise-linear embedding $N \to M$ which is a homotopy equivalence. Such an embedding is not required to be locally flat. We call $M$ \emph{spineless} if it does not admit a spine.

In this paper, we prove:

\begin{theorem}\label{thm:spineless}
There exist infinitely many smooth, compact, spineless $4$-manifolds which are homotopy equivalent to $S^2$.
\end{theorem}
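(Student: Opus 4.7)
The plan is to construct the manifolds $M_n$ via explicit handle decompositions and use Heegaard Floer $d$-invariants of the boundary to rule out PL spines. I would first fix an infinite family of knots $K_n \subset S^3$ with unboundedly large Heegaard Floer invariants (e.g., $V_0(K_n) \to \infty$). For each $n$, build a smooth compact $4$-manifold $M_n$ having the homology of $S^2$ and trivial $\pi_1$, so that $M_n \simeq S^2$ by Whitehead's theorem. The construction should be arranged so that the boundary $Y_n = \partial M_n$ is a rational homology $3$-sphere (or a manifold with $b_1 = 1$) whose $d$-invariants can be computed explicitly --- for instance, by presenting $Y_n$ as a surgery on $K_n$ --- and so that at least one $d$-invariant of $Y_n$ can be made as positive as we like by varying $n$.

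Next, suppose for contradiction that some $M_n$ admits a PL spine $\iota \colon S^2 \hookrightarrow M_n$. The non-locally-flat points of $\iota(S^2)$ are isolated, and by merging them into a single cone point, we may assume the local link is a single knot $J \subset S^3$ (the connected sum of the original link knots). A regular neighborhood $N$ of $\iota(S^2)$ is then PL-homeomorphic to the trace $X_{e,J}$ of $e$-framed surgery on $J$, where $e = [\iota(S^2)] \cdot [\iota(S^2)]$ is determined by the intersection form of $M_n$. The value of $e$ is pinned down by the long exact sequence of $(M_n, \partial M_n)$: in the rational homology sphere case, $|e| = |H_1(Y_n; \Z)|$. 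The complement $W = M_n \setminus \operatorname{int}(N)$ is a compact $4$-manifold with $\partial W = \partial N \cup Y_n$, giving a cobordism from $\partial N = S^3_e(J)$ to $Y_n$. Because $\iota$ is a homotopy equivalence, $M_n \setminus \iota(S^2)$ deformation retracts onto $Y_n$, which forces $W$ to be a $\Z$-homology cobordism (both boundaries include into $W$ as homology equivalences).

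The $d$-invariants of $Y_n$ therefore agree with those of $S^3_e(J)$ under the $\spinc$ identification given by $W$. The Ni--Wu large surgery formula (and its $0$-surgery and negative-framing analogues) bounds the $d$-invariants of $S^3_e(J)$ above by those of the corresponding lens space $L(e, 1)$ (or $S^1 \times S^2$ if $e = 0$). By arranging in the initial construction that some $d$-invariant of $Y_n$ strictly exceeds every $d$-invariant of $L(|e|, 1)$ for the forced $e$, we obtain a contradiction, showing $M_n$ has no spine. Varying $n$ and distinguishing the resulting $M_n$ by the values of their boundaries' $d$-invariants produces infinitely many pairwise non-homeomorphic examples.

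The main obstacle I anticipate is the geometric input of the second paragraph: rigorously showing that a PL-embedded $2$-sphere in a $4$-manifold, after concentrating its cone points, has a regular neighborhood PL-homeomorphic to a surgery trace, and that the complement is a $\Z$-homology cobordism. This demands a careful local analysis near non-locally-flat points --- using the cone-on-a-knot local model --- and a clean application of excision or Mayer--Vietoris. Once this translation is in place, the Heegaard Floer side reduces to a direct, if delicate, computation, with the remaining subtlety being to ensure the $d$-invariant inequality is violated uniformly over the (finitely many) possible framings $e$.
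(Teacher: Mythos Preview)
Your obstruction mechanism is correct and is the paper's: a PL spine has a regular neighborhood diffeomorphic to a trace $X_e(J)$ for some knot $J \subset S^3$, the complement is a $\Z$-homology cobordism from $S^3_e(J)$ to $\partial M_n$, hence their $d$-invariants agree, and Ni--Wu then forces $d(\partial M_n, \spincs_i) \le d(L(e,1), \spincs_i)$ for every $i$. (The geometric input you flag as the ``main obstacle'' is in fact routine; the paper dispatches it in a few lines.) The genuine gap is the construction. You propose to realize $\partial M_n$ as surgery on a knot $K_n \subset S^3$ while making some $d$-invariant of $\partial M_n$ exceed the lens space value, but these two requirements are incompatible: any $S^3_e(K_n)$ already satisfies the very Ni--Wu bound you intend to violate (and taking $V_0(K_n) \to \infty$ drives the $d$-invariants to $-\infty$, not $+\infty$). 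Worse, if $M_n$ is the trace $X_e(K_n)$ itself --- the natural way to get a simply-connected manifold with that boundary --- then $M_n$ already \emph{has} a spine, namely the cone on $K_n$ union the core of the $2$-handle. The obstruction can fire only when $\partial M_n$ is \emph{not} homology cobordant to any surgery on a knot in $S^3$, so building it as one is self-defeating.

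What the paper does instead is attach the $2$-handle not to $B^4$ but to $(Y_p \setminus B^3) \times [0,1]$, where $Y_p$ is a Brieskorn homology sphere with nontrivial $\HFred$, along a singular fiber $K_p \subset Y_p$. The resulting boundary is $Q_{-4p-3} \mathbin{\#} (-Y_p)$, whose $d$-invariants (computed from Doig's values for $Q_m$) violate the constraints of Theorem~\ref{thm:obstruction}; this is possible precisely because the Ni--Wu formula is only an inequality, not an equality, for surgery on knots in homology spheres other than $S^3$ (Remark~\ref{rmk:NiWu-fail}). A further step your outline does not anticipate is verifying simple connectivity of the resulting $4$-manifold: one must show that the singular fiber normally generates $\pi_1(Y_p)$, which the paper does in Lemma~\ref{lemma:brieskornweightone}.
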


By way of background, Browder \cite{BrowderEmbedding}, Casson, Haefliger \cite{HaefligerKnotted}, Sullivan, and Wall \cite{WallSurgery} showed that when $m-n > 2$, any homotopy equivalence from $N$ to $M$ can be perturbed into a spine. When $m-n=2$, Cappell and Shaneson \cite{CappellShanesonPL} showed that the same is true for any odd $m\ge 5$, and for any even $m \ge 6$ provided that $M$ and $N$ are simply-connected; they also produced examples of non-simply-connected, spineless manifolds for any even $m \ge 6$ \cite{CappellShanesonSpineless}. (See \cite{ShanesonSpines} for a summary of their results.) In dimension $4$, Matsumoto \cite{MatsumotoSpine} produced an example of a spineless $4$-manifold homotopy equivalent to the torus; the proof relies on higher-dimensional surgery theory. However, the question of finding spineless, simply-connected $4$-manifolds has remained open until now. (It appears in Kirby's problem list \cite[Problem 4.25]{Kirby}.)

The proof of the theorem proceeds in two parts.  The first is to give an obstruction to a spine in a
PL $4$-manifold homotopy equivalent to $S^2$ coming from Heegaard Floer homology.  This obstruction
only depends on the boundary of the 4-manifold and the sign of the intersection form. The second
step is to construct the manifolds homotopy equivalent to $S^2$ that fail the obstruction.

\subsection*{Acknowledgments}

We are grateful to Weimin Chen, \c{C}a\u{g}r{\i}  Karakurt, and Yukio Matsumoto for bringing this problem to our attention, and to Marco Golla, Josh Greene, Jen Hom, and Danny Ruberman for helpful conversations.

\section{Obstruction} \label{sec:d-invt}

In order to prove Theorem~\ref{thm:spineless}, we use an obstruction coming from Heegaard Floer homology. Recall that for any rational homology sphere $Y$ and any spin$^c$ structure $\spincs$ on $Y$, Ozsv\'ath and Szab\'o \cite{OSabsgr} define the \emph{correction term} $d(Y,\spincs) \in \Q$, which is invariant under spin$^c$ rational homology cobordism. To state our obstruction, we first establish the following notational convention.

\begin{convention} \label{conv:spinc}
Suppose $X$ is a smooth, compact, oriented $4$-manifold with $H_*(X) \cong H_*(S^2)$, and let $n$ denote the self-intersection number of a generator of $H_2(X)$. Let $Y = \partial X$, which has $H_1(Y) \cong H^2(Y) \cong \Z/n$. Fix a generator $\alpha \in H_2(X)$. For $i \in \Z$, let $\spinct_i$ denote the unique spin$^c$ structure on $X$ with
\[
\gen{c_1(\spinct_i), \alpha} + n = 2i.
\]
Let $\spincs_i = \spinct_i |_Y$; this depends only on the class of $i$ mod $n$. We will often treat the subscript of $\spincs_i$ as an element of $\Z/n$.

Conjugation of spin$^c$ structures swaps $\spinct_i$ with $\spinct_{n-i}$ and $\spincs_i$ with
$\spincs_{n-i} = \spincs_{-i}$. In particular, $\spincs_0$ is self-conjugate, as is $\spincs_{n/2}$
if $n$ is even. Choosing the opposite generator for $H_2(X)$ likewise replaces each $\spinct_i$ or
$\spincs_i$ with its conjugate. Because of the conjugation symmetry of Heegaard Floer homology, all
statements below are insensitive to this choice.

Finally, when $n \ne 0$, we have
\begin{equation} \label{eq:d-mod2}
d(Y,\spincs_i) \equiv \frac{(2i-n)^2  - \abs{n} }{4n} \pmod {2\Z}
\end{equation}
by \cite[Theorem 1.2]{OSabsgr}.
\end{convention}

Our obstruction to the existence of a spine comes from the following theorem:

\begin{theorem}\label{thm:obstruction}
Let $X$ be any smooth, compact, oriented $4$-manifold with $H_*(X) \cong H_*(S^2)$, with a generator
of $H_2(X)$ having self-intersection $n > 1$, and let $Y = \partial X$. If a generator of $H_2(X)$
can be represented by a piecewise-linear embedded $2$-sphere (e.g., if $X$ admits an $S^2$ spine),
then for each $i \in \{0, \dots, n-1\}$, 
\begin{equation} \label{eq:obstruction}
d(Y, \spincs_i) - d(Y, \spincs_{i+1}) = 
\begin{cases}
\dfrac{n-2i-1}{n} \text{ or } \dfrac{-n-2i-1}{n} & \text{if } 0 \le i \le \dfrac{n-2}{2} \\
0 & \text{if $n$ is odd and } i = \dfrac{n-1}{2} \\
\dfrac{n-2i-1}{n} \text{ or } \dfrac{3n-2i-1}{n} & \text{if } \dfrac{n}{2} \le i \le n-1.
\end{cases}
\end{equation}
In particular, for any $i$, we have
\begin{equation} \label{eq:Y-abs}
\abs{d(Y, \spincs_i) - d(Y, \spincs_{i+1})} \le \frac{2n-1}{n}.
\end{equation}
\end{theorem}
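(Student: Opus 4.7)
The plan is to reduce the obstruction to a known surgery formula for $d$-invariants. Suppose $\alpha \in H_2(X)$ is represented by a PL embedded 2-sphere $S \subset X$. A PL 2-sphere in a 4-manifold is locally flat off a finite set of singular points $p_1, \dots, p_k$, where the local structure near each $p_i$ is the cone on a knot $K_i \subset S^3$. The first step is to identify $\partial \nu(S)$ for a regular neighborhood. By standard PL moves --- sliding the singularities together along arcs on $S$, with the local knots fusing into their connected sum --- I expect to be able to reduce to the case of a single singular point with local knot $K := K_1 \conn \dots \conn K_k$, after which $\nu(S)$ has the handle description $B^4 \cup h$ with $h$ an $n$-framed 2-handle attached along $K$. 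Hence $\partial \nu(S) \cong S^3_n(K)$ (which recovers $L(n,1)$ when $S$ is locally flat and $K$ is the unknot).

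Second, I argue that $W := X \setminus \operatorname{int} \nu(S)$ is a rational homology cobordism from $Y$ to $Y' := S^3_n(K)$. Since $[S]$ generates $H_2(X)$, a Mayer--Vietoris computation on $X = W \cup \nu(S)$ gives $H_*(W; \Q) \cong H_*(\mathrm{pt}; \Q)$, and $Y'$ is a $\Q$-homology sphere. The spin$^c$ structures $\spinct_i$ on $X$ restrict through $\nu(S)$ to spin$^c$ structures $\spincs'_i \in \spinc(Y')$ labeled by the standard $\Z/n$ convention on $\spinc(S^3_n(K))$ coming from the 2-handlebody, while they restrict through $W$ to $\spincs_i \in \spinc(Y)$; the indices match because the generator of $H_2(\nu(S))$ maps to $\alpha$. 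By Ozsv\'ath--Szab\'o's $\Q$-homology cobordism invariance of the correction term, $d(Y, \spincs_i) = d(S^3_n(K), \spincs'_i)$ for every $i \in \Z/n$.

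Third comes the arithmetic. Ni--Wu's surgery formula gives, for $0 \le i \le n-1$,
\[
d\bigl(S^3_n(K), \spincs'_i\bigr) \;=\; d\bigl(L(n,1), \spincs_i\bigr) \;-\; 2\,V_{\min(i,\,n-i)}(K),
\]
where $V_j(K) \in \Z_{\ge 0}$ is the non-increasing sequence from the knot Floer complex satisfying $V_j - V_{j+1} \in \{0,1\}$. A direct computation from \eqref{eq:d-mod2} yields $d(L(n,1), \spincs_i) - d(L(n,1), \spincs_{i+1}) = (n-2i-1)/n$. Subtracting gives
\[
d(Y, \spincs_i) - d(Y, \spincs_{i+1}) \;=\; \frac{n-2i-1}{n} \;-\; 2\bigl[V_{\min(i,n-i)} - V_{\min(i+1,n-i-1)}\bigr],
\]
and the bracketed quantity evaluates to $V_i - V_{i+1} \in \{0,1\}$ when $0 \le i \le (n-2)/2$, to $0$ when $n$ is odd and $i = (n-1)/2$, and to $V_{n-i} - V_{n-i-1} \in \{-1,0\}$ when $n/2 \le i \le n-1$; these produce exactly the alternatives listed in \eqref{eq:obstruction}, and the bound \eqref{eq:Y-abs} follows by an immediate case check. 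The main technical obstacle, I expect, is the first step --- rigorously identifying $\partial \nu(S) \cong S^3_n(K)$ for a PL sphere with multiple singularities and verifying the spin$^c$ correspondence under $W$; the Heegaard Floer inputs and the case analysis are routine once that setup is in place.
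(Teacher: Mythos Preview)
Your proposal is correct and follows essentially the same route as the paper: reduce the PL sphere to one with a single cone singularity on a knot $K$, identify its regular neighborhood with the trace $X_n(K)$ so that the complement is a homology cobordism from $S^3_n(K)$ to $Y$ matching the spin$^c$ labelings, and then read off \eqref{eq:obstruction} from the Ni--Wu surgery formula together with $V_i - V_{i+1} \in \{0,1\}$. The only cosmetic differences are that the paper isolates the Ni--Wu computation as a separate lemma and asserts an integral (rather than merely rational) homology cobordism, but your rational statement already suffices for the $d$-invariant argument.
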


It is easy to verify that \eqref{eq:Y-abs} follows as an easy consequence of \eqref{eq:obstruction}.

For any knot $K \subset S^3$, let $X_n(K)$ denote the trace of $n$-surgery on $S^3$, i.e., the manifold obtained by attaching an $n$-framed $2$-handle to the $4$-ball along a knot $K \subset S^3$. Note that $X_n(K)$ is homotopy equivalent to $S^2$ and has a spine obtained as the union of the cone over $K$ in $B^4$ with the core of the $2$-handle.

\begin{lemma} \label{lemma:S3-surgery}
For any knot $K \subset S^3$ and any $n>0$, the manifold $Y = S^3_n(K)$ satisfies the conclusions of Theorem \ref{thm:obstruction}.
\end{lemma}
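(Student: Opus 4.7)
The plan is to observe that $X_n(K)$ is a ready-made instance of the setup of Theorem~\ref{thm:obstruction}, and then invoke that theorem directly. So the lemma should be essentially an immediate corollary, and the only work is to check the hypotheses.

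First I would verify that $X_n(K)$ fits the framework of Convention~\ref{conv:spinc} with the given parameter $n$. Since $X_n(K)$ is built from $B^4$ by attaching a single $n$-framed $2$-handle along $K$, it is a smooth, compact, oriented $4$-manifold that deformation retracts onto the $2$-sphere formed by a Seifert disk (or any slice disk realised piecewise-linearly) together with the core of the handle. Hence $H_*(X_n(K)) \cong H_*(S^2)$, a generator of $H_2$ has self-intersection equal to the framing $n$, and $\partial X_n(K) = S^3_n(K) = Y$ by definition.

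Next, I would produce a PL embedded $2$-sphere representing a generator of $H_2(X_n(K))$, which is precisely the hypothesis needed in Theorem~\ref{thm:obstruction}. As already noted in the paragraph preceding the lemma, such a sphere is given by the union of the PL cone over $K \subset \partial B^4 = S^3$ inside $B^4$ with the smoothly embedded core disk of the attached $2$-handle; these two PL disks are glued along $K$ to give a PL embedded $S^2$. (The cone is not smooth unless $K$ is unknotted, but it is certainly PL.)

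With both hypotheses verified for $n \ge 2$, Theorem~\ref{thm:obstruction} applied to $X = X_n(K)$ yields the identities \eqref{eq:obstruction} and \eqref{eq:Y-abs} for $Y = S^3_n(K)$, which is exactly the conclusion of the lemma. The remaining case $n=1$ is vacuous: $S^3_1(K)$ is an integer homology sphere carrying only the spin$^c$ structure $\spincs_0$, and the ranges of $i$ appearing in \eqref{eq:obstruction} are empty, so there is nothing to check. I do not anticipate any obstacle; the only potentially delicate point is that the cone construction genuinely yields a PL embedding (rather than merely an immersion), but this is standard in dimension~$4$.
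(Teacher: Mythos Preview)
Your argument is circular. In the paper, Lemma~\ref{lemma:S3-surgery} is not a corollary of Theorem~\ref{thm:obstruction}; it is the computational input on which the proof of Theorem~\ref{thm:obstruction} rests. The proof of Theorem~\ref{thm:obstruction} proceeds by taking a PL sphere in $X$, observing that a regular neighborhood of it is diffeomorphic to some $X_n(K)$, and that the complement is a homology cobordism from $S^3_n(K)$ to $Y$; one then concludes $d(Y,\spincs_i)=d(S^3_n(K),\spincs_i)$ and \emph{invokes Lemma~\ref{lemma:S3-surgery}} to finish. So deducing the lemma from the theorem would assume what you are trying to prove.

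The actual proof of Lemma~\ref{lemma:S3-surgery} is a direct computation using the Ni--Wu formula
\[
d(S^3_n(K),\spincs_i)=\frac{(2i-n)^2-n}{4n}-2\max\{V_i(K),V_{n-i}(K)\}
\]
together with the fact that the sequence $(V_i(K))$ is non-increasing and drops by at most $1$ at each step. One simply subtracts adjacent values and checks the three cases in \eqref{eq:obstruction}. This is where the genuine content lies, and your proposal bypasses it entirely. (A minor side remark: for $n=1$ the range $0\le i\le n-1$ is not empty; rather $i=0=(n-1)/2$ falls into the middle case, and the difference $d(Y,\spincs_0)-d(Y,\spincs_1)$ vanishes trivially since $\spincs_1=\spincs_0$.)
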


\begin{proof}
Associated to any knot $K \subset S^3$, Ni and Wu \cite[Section 2.2]{NiWu} defined a sequence of nonnegative integers $V_i(K)$, which are derived from the knot Floer complex of $K$. (See also \cite{RasmussenThesis}.) By \cite[Equation 2.3]{HomWu4Genus}, these numbers have the property that
\begin{equation} \label{eq:Vi-noninc}
V_i(K)-1 \le V_{i+1}(K) \le V_i(K);
\end{equation}
that is, the sequence $(V_i(K))$ is non-increasing and only decreases in increments of
$1$. Ni and Wu proved that for each $i=0, \dots, n-1$, we have
\begin{equation} \label{eq:NiWu}
d(S^3_n(K), \spincs_i) = \frac{(2i-n)^2-n}{4n} - 2 \max\{ V_i(K), V_{n-i}(K)\}.
\end{equation}
(The first term in \eqref{eq:NiWu} is the $d$ invariant of the lens space $L(n,1)$ in a particular spin$^c$ structure; see \cite[Proposition 4.8]{OSabsgr}.)

For $0 \le i \le \frac{n-2}{2}$, we then compute:
\begin{align*}
d(S^3_n(K), \spincs_i) - d(S^3_n(K), \spincs_{i+1})
&= \frac{(2i-n)^2 - (2i+2-n)^2}{4n} - 2(V_i(K)-V_{i+1}(K)) \\
&= \frac{n-2i-1}{n} - 2(V_i(K)-V_{i+1}(K)) \\
&= \frac{n-2i-1}{n} \text{ or } \frac{-n-2i-1}{n}.
\end{align*}
(The last line follows from the fact that $V_i(K)-V_{i+1}(K)$ equals either $0$ or $1$.)

If $\frac{n}{2} \le i \le n-1$, then
\begin{align*}
d(S^3_n(K), \spincs_i) - d(S^3_n(K), \spincs_{i+1}) &= d(S^3_n(K), \spincs_{n-i}) - d(S^3_n(K), \spincs_{n-i-1}),
\end{align*}
and we may apply the previous case using $n-i-1$ in place of $i$.

Finally, in the special case where $n$ is odd and $i=\frac{n-1}{2}$, it is easy to compute that
\[
d(S^3_{n}(K), \spincs_i) = d(S^3_{n}(K), \spincs_{i+1}) = \frac{1-n}{4n} - 2V_{(n-1)/2}(K),
\]
so the difference is $0$, as required.
\end{proof}

\begin{proof}[Proof of Theorem \ref{thm:obstruction}]
Suppose $S$ is a PL embedded sphere representing a generator of $H_2(X)$. We may assume that $S$ has a single singularity modeled on the cone of a knot $K \subset S^3$ and is otherwise smooth. Therefore, $S$ has a tubular neighborhood diffeomorphic to $X_n(K)$.  To see this, observe that a neighborhood of the cone point is a copy of $B^4$ and the rest of the neighborhood then makes up a 2-handle attached along $K$.  That the framing is $n$ follows from the fact that the intersection form of $X$ is $(n)$.  The complement of the interior of this neighborhood is a homology cobordism between $S^3_n(K)$ and $Y$; moreover, for each $i \in \Z/n$, the spin$^c$ structures labeled $\spincs_i$ on $S^3_n(K)$ and $Y$ as in Convention~\ref{conv:spinc} are identified through this cobordism. In particular, $d(Y,\spincs_i) = d(S^3_n(K), \spincs_i)$. By Lemma \ref{lemma:S3-surgery}, we deduce that the conclusions of the theorem hold for $Y$.
\end{proof}

\begin{remark} \label{rmk:NiWu-fail}
For surgery on a knot $K$ in an arbitrary homology sphere $Y$, the analogue of the Ni--Wu formula \eqref{eq:NiWu} need not hold. Instead, just as in our paper with Hom \cite[Lemma 2.2]{HomLevineLidmanConcordance}, one can prove an inequality
\begin{equation} \label{eq:NiWu-ineq}
-2N_Y \leq d(Y_n(K), \spincs_i) - d(Y) - \frac{(2i-n)^2-n}{4n} + 2 \max\{ V_i(K), V_{n-i}(K)\} \leq
0
\end{equation}
where
\[
N_Y = \min \{k \geq 0 \mid U^k \cdot \HFred(Y) = 0 \}.
\]
It is precisely the failure of \eqref{eq:NiWu} to hold in general that makes it possible to obstruct
the existence of PL disks and spheres.
\end{remark}

\begin{remark} \label{rmk:0-framing}
There is also an obstruction to the existence of a PL sphere in the case where $n=0$, although we do
not know of any actual example where it is effective. If $Y$ is any $3$-manifold with vanishing
triple cup product on $H^1(Y)$, and $\spincs$ is any torsion spin$^c$ structure on $Y$, then there
are two relevant invariants to consider: the untwisted ``bottom'' $d$ invariant $d_b(Y,\spincs)$
defined by Ozsv\'ath and Szab\'o \cite{OSabsgr} (see also \cite{LRS}), and the totally twisted $d$
invariant $\ul d(Y,\spincs)$ defined by Behrens and Golla \cite{BehrensGollaCorrection}. These
invariants are both preserved under spin$^c$ homology cobordism, and they satisfy $\ul d(Y,\spincs)
\le d_b(Y,\spincs)$ \cite[Proposition 3.8]{BehrensGollaCorrection}. We do not know of any
$3$-manifold for which this inequality is strict.

For any knot $K \subset S^3$, Behrens and Golla showed that $\ul d(S^3_0(K), \spincs_0) = d_b(S^3_0(K), \spincs_0)$, where $\spincs_0$ denotes the unique torsion spin$^c$ structure \cite[Example 3.9]{BehrensGollaCorrection}. Just as in the proof of Theorem \ref{thm:obstruction}, it follows that if $X$ is a smooth $4$-manifold with the homology of $S^2$ and vanishing intersection form, and the generator of $H_2(X)$ can be represented by a PL sphere, then $\ul d(\partial X, \spincs_0) = d_b(\partial X, \spincs_0)$.
\end{remark}

\section{Construction}

We now describe a family of $4$-manifolds homotopy equivalent to $S^2$ which fail to satisfy the conclusion of Theorem \ref{thm:obstruction}.

\begin{figure}
\subfigure[]{
\labellist
 \pinlabel $0$ [t] at 17 8
 \pinlabel $m+2$ [t] at 60 8
\endlabellist
\includegraphics{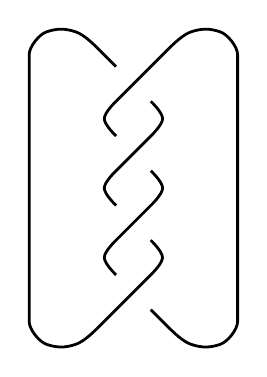}
\label{subfig:Qm-T24}}
\subfigure[]{
\labellist
 \pinlabel $0$ [t] at 17 8
 \pinlabel $m-2$ [t] at 60 8
\endlabellist
\includegraphics{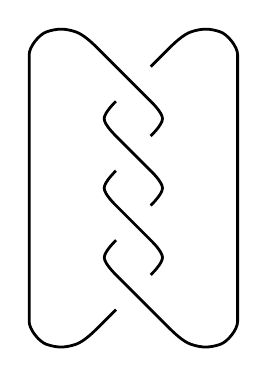}
}
\subfigure[]{
\labellist
 \pinlabel $-2$ [b] at 32 95
 \pinlabel $-1$ [b] at 72 95
 \pinlabel $m$ [b] at 112 95
 \pinlabel $-2$ [t] at 72 8
\endlabellist
\includegraphics{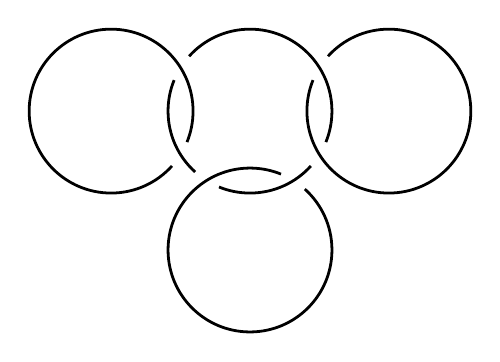}
\label{subfig:Qm-seifert}}
\caption{Three surgery descriptions of $Q_m$.}
\label{fig:Qm}
\end{figure}

For any integer $m$, let $Q_m$ denote the total space of a circle bundle over $\RP^2$ with Euler number $m$. This is a rational homology sphere with
\[
H_1(Q_m) \cong
\begin{cases}
  \Z/2 \oplus \Z/2 & m \text{ even} \\
  \Z/4 & m \text{ odd}.
\end{cases}
\]
The manifold $Q_m$ can be described by any of the surgery diagrams in Figure \ref{fig:Qm}.

For any $m$, Doig \cite[Section 3]{Doig} proved that the $d$ invariants of $Q_m$ in the four
spin$^c$ structures are
\begin{equation}\label{eq:d(Qm)}
\left\{\frac{m+2}{4}, \frac{m-2}{4}, 0, 0\right\}.
\end{equation}
(See also the work of Ruberman, Strle, and the first author \cite[Theorem 5.1]{LRS}.)


\begin{figure}
\labellist
 \pinlabel $p$ [b] at 41 95
 \pinlabel $-2$ [b] at 81 95
 \pinlabel $-1$ [b] at 121 95
 \pinlabel $-4p-3$ [b] at 161 95
 \pinlabel $-2$ [t] at 121 8
 \pinlabel $K_p$ [r] at 9 72
\endlabellist
\includegraphics{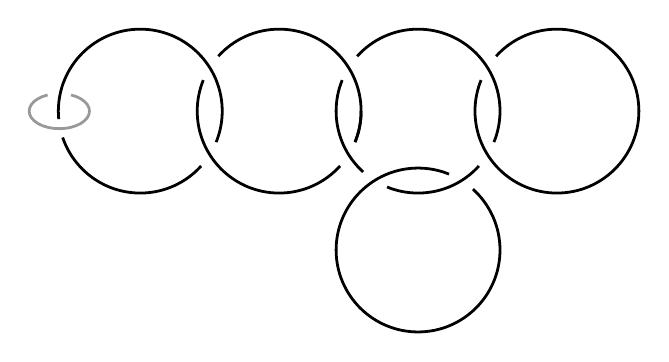}
\caption{Surgery description of the Brieskorn sphere $Y_p$.  The knot $K_p$ represents a singular fiber in a Seifert fibration on $Y_p$.} \label{fig:plumbing}.
\end{figure}

For each integer $p$, let $Y_p$ be the $3$-manifold given by the surgery diagram in Figure \ref{fig:plumbing}, which naturally bounds a plumbed $4$-manifold.
It is easy to check that $Y_p$ is the Seifert fibered homology sphere
\[
Y_p \cong
\begin{cases}
\Sigma(2,-(2p+1),-(4p+3)) & p < -1 \\
S^3 & p=-1,0 \\
-\Sigma(2,2p+1,4p+3) & p > 0.
\end{cases}
\]
(Our convention is that for pairwise relatively prime integers $a,b,c>0$, the Brieskorn sphere $\Sigma(a,b,c)$ is oriented as the boundary of a positive-definite plumbing. Note, however, that the plumbing shown in Figure \ref{fig:plumbing} is indefinite.)

Let $K_p \subset Y_p$ be the knot obtained as a meridian of the $p$-framed surgery curve, shown in Figure~\ref{fig:plumbing}. In the cases $p=-1$ or $p=0$, where $Y_p \cong S^3$, $K_p$ is the unknot or the right-handed trefoil, respectively; otherwise, $K_p$ is the singular fiber of order $2p+1$. The $0$-framing on this curve (viewed as a knot in $S^3$) corresponds to the $+4$ framing on $K_p$ (as a knot in $Y_p$). Performing surgery using this framing produces $Q_{-4p-3}$, since we can cancel the $p$-framed component with its $0$-framed meridian to produce Figure \ref{subfig:Qm-seifert} with $m=-4p-3$.



We are now able to construct the spineless four-manifolds claimed in Theorem~\ref{thm:spineless}.
Define the four-manifold $W_p$ obtained by taking $(Y_p - B^3) \times [0,1]$, which has boundary $Y_p
\conn {- Y_p}$, and attaching a $+4$-framed 2-handle along the knot $K_p \times \{1\}$.  The boundary of $W_p$
is $Q_{-4p-3} \conn {- Y_p}$; denote this three-manifold by $M_p$.

\begin{proposition}
For each $p$, the manifold $W_p$ is homotopy equivalent to $S^2$.
\end{proposition}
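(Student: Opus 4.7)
The plan is to show that $W_p$ is simply connected with the same homology as $S^2$, and then to apply Whitehead's theorem.

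For the homology, note that $(Y_p \setminus B^3) \times I$ deformation retracts to $Y_p \setminus \{\mathrm{pt}\}$; since $Y_p$ is an integer homology $3$-sphere, this has the homology of a point. The Mayer--Vietoris sequence for
\[
W_p = \bigl((Y_p \setminus B^3) \times I\bigr) \cup (D^2 \times D^2),
\]
whose intersection is a tubular neighborhood of $K_p \times \{1\}$ (homotopy equivalent to $S^1$), then gives $H_0(W_p) \cong H_2(W_p) \cong \Z$ and $H_k(W_p) = 0$ otherwise.

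For the fundamental group, van Kampen's theorem yields $\pi_1(W_p) \cong \pi_1(Y_p)/\langle\!\langle [K_p] \rangle\!\rangle$. The cases $p \in \{-1, 0\}$ are trivial since $Y_p = S^3$. For other $p$, $K_p$ is the singular fiber of order $|2p+1|$ in the Seifert fibration of $Y_p$. For $p > 0$, the Seifert invariants $(b, \beta_1, \beta_2, \beta_3) = (-1, 1, p, 1)$ satisfy the integer homology sphere condition $\alpha_1 \alpha_2 \alpha_3 (b + \sum \beta_i/\alpha_i) = \pm 1$ and produce the presentation
\[
\pi_1(Y_p) = \gen{x_1, x_2, x_3, h \mid [x_i, h] = 1,\ x_1^2 = h,\ x_2^{2p+1} = h^p,\ x_3^{4p+3} = h,\ x_1 x_2 x_3 = h},
\]
with $[K_p] = x_2$. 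Imposing $x_2 = 1$: the relation $x_1 x_3 = h = x_1^2$ forces $x_3 = x_1$; the relation $x_3^{4p+3} = h$ then becomes $x_1^{4p+1} = 1$; and $h^p = 1$ becomes $x_1^{2p} = 1$. Since $\gcd(4p+1, 2p) = 1$, we conclude $x_1 = 1$, hence $h = 1$ and $x_3 = 1$. The analogous calculation (with the substitution $q = -p - 1$) handles the case $p < -1$.

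Since $W_p$ is a smooth manifold (hence has the homotopy type of a CW complex), is simply connected, and has the homology of $S^2$, the Hurewicz theorem produces a map $S^2 \to W_p$ inducing an isomorphism on $H_2$, and Whitehead's theorem then implies this map is a homotopy equivalence. The main technical step is the Seifert-invariant computation for $\pi_1$, which amounts to choosing invariants satisfying the $\Z$-homology sphere condition and verifying a coprimality relation in $p$; the homology calculation and the application of Whitehead are routine.
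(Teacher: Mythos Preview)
Your overall strategy matches the paper's: show $H_*(W_p)\cong H_*(S^2)$ from the fact that $(Y_p\setminus B^3)\times I$ is an integer homology ball, reduce simple connectivity to showing that $[K_p]$ normally generates $\pi_1(Y_p)$, and finish with Hurewicz/Whitehead. The paper packages the $\pi_1$ step as a general lemma (any singular fiber normally generates $\pi_1(\Sigma(p,q,r))$); you instead attempt a direct computation with specific Seifert invariants.

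There is a genuine gap in your identification $[K_p]=x_2$. In the presentation you wrote, the generators $x_i$ are meridians of the rationally framed curves in the Seifert surgery description, whereas $K_p$ is the singular fiber of order $2p+1$, i.e.\ the \emph{core} of the corresponding rational surgery. That core is represented by $x_2^{\,a}h^{\,b}$ for integers $a,b$ with
\[
\det\begin{pmatrix} a & b \\ 2p+1 & -p \end{pmatrix}=\pm 1,
\]
for instance $(a,b)=(2,-1)$, giving $[K_p]=x_2^{\,2}h^{-1}$; one checks $(x_2^{\,2}h^{-1})^{2p+1}=h^{-1}$, as befits a fiber of multiplicity $2p+1$. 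Taking $(a,b)=(1,0)$ would require $|p|=1$, so $[K_p]=x_2$ fails for general $p$. Thus your computation that $\pi_1(Y_p)/\langle\!\langle x_2\rangle\!\rangle=1$ is correct but is not, as written, a computation of $\pi_1(W_p)$.

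The fix is exactly the paper's argument: since $x_2$ and $h$ commute and $\begin{pmatrix} 2 & -1 \\ 2p+1 & -p \end{pmatrix}\in GL_2(\Z)$, the elements $x_2^{\,2}h^{-1}$ and $x_2^{\,2p+1}h^{-p}$ generate the same subgroup as $x_2$ and $h$; hence in $\pi_1(Y_p)/\langle\!\langle x_2^{\,2}h^{-1}\rangle\!\rangle$ both $x_2$ and $h$ die, leaving $\langle x_1,x_3\mid x_1^2=x_3^{4p+3}=x_1x_3=1\rangle$, which is trivial since $\gcd(2,4p+3)=1$. Equivalently, once you know $x_2$ and $h$ lie in $\langle\!\langle [K_p]\rangle\!\rangle$, your own computation that killing $x_2$ trivializes the group finishes the job.
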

\begin{proof}
First, notice that $(Y_p - B^3) \times [0,1]$ is an integer homology ball, so after attaching the
2-handle, $W_p$ has the same homology as $S^2$.  To show that $W_p$ is simply-connected (and hence homotopy equivalent to $S^2$), it is sufficient to show that the homotopy class of $K_p$ normally generates $\pi_1(Y_p)$.  This is obvious in the case that $p = -1, 0$ as $Y_p = S^3$.  The following lemma proves this claim in the remaining cases.
\end{proof}

\begin{figure}
\labellist
 \pinlabel $\dfrac{p}{p'}$ [r] at 10 79
 \pinlabel $\dfrac{q}{q'}$ [l] at 135 79
 \pinlabel $\dfrac{r}{r'}$ [r] at 50 34
 \pinlabel $e$ [bl] at 80 102
 \small
 \pinlabel $x$ [b] at 32 112
 \pinlabel $h$ [b] at 72 112
 \pinlabel $y$ [b] at 112 112
 \pinlabel $z$ [t] at 72 8
\endlabellist
\includegraphics{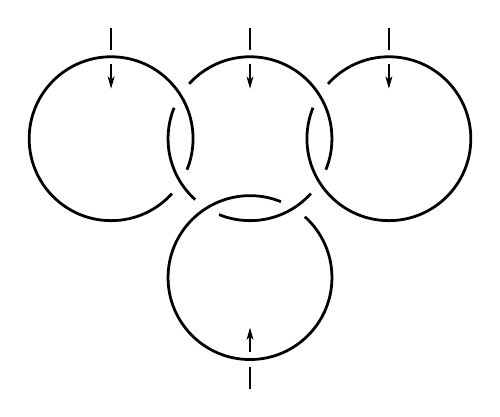}
\caption{Surgery description of $\Sigma(p,q,r)$, along with generators for $\pi_1$.} \label{fig:seifert-pi1}
\end{figure}

\begin{lemma}\label{lemma:brieskornweightone}
For any pairwise relatively prime integers $p,q,r$, the fundamental group of the Brieskorn sphere $\Sigma(p,q,r)$ is normally generated by any of the singular fibers.
\end{lemma}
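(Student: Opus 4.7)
The plan is to compute directly the quotient of $\pi_1(\Sigma(p,q,r))$ by the normal closure of a singular fiber. I would use the classical Brieskorn presentation
\[
\pi_1(\Sigma(p,q,r)) = \langle a, b, c \mid a^p = b^q = c^r = abc \rangle,
\]
in which $a, b, c$ represent the three singular fibers of multiplicities $p, q, r$ respectively, and the common central element $h := a^p = b^q = c^r = abc$ is the regular fiber. The identification of $a,b,c$ with the singular fibers is classical; it can be extracted, for instance, from Milnor's paper on Brieskorn singularities, or checked by comparing element orders in small examples such as $\Sigma(2,3,5)$.

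It suffices, by symmetry, to show that $a$ normally generates. In the quotient $G := \pi_1(\Sigma(p,q,r))/\langle\langle a\rangle\rangle$, we have $a = 1$, hence $h = a^p = 1$; the defining relations then read $b^q = c^r = 1$ and $bc = abc = 1$. The last gives $c = b^{-1}$, so $b^r = c^{-r} = 1$; combined with $b^q = 1$ and $\gcd(q,r) = 1$, this forces $b = 1$, and hence $c = 1$. Thus $G$ is trivial. The identical argument handles $b$ using $\gcd(p,r) = 1$, and $c$ using $\gcd(p,q) = 1$.

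The only real subtlety is identifying the singular fibers as explicit elements of $\pi_1$; the combinatorial group theory is immediate once the presentation is in hand. If one wishes to avoid invoking the Brieskorn presentation and instead work from the surgery diagram in Figure~\ref{fig:seifert-pi1}, the singular fiber of multiplicity $p$ is represented by an element $F = x^\alpha h^{\alpha'}$ with $\gcd(p,\alpha) = 1$ (since $(p,p')$ and $(\alpha,\alpha')$ span the meridian--longitude lattice of the boundary torus of the singular-fiber neighborhood), and an analogous quotient computation then kills $h$, $x$, $y$, and $z$ in turn.
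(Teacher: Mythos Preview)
Your proof is correct and follows essentially the same strategy as the paper's: quotient $\pi_1(\Sigma(p,q,r))$ by the normal closure of a singular fiber and use pairwise coprimality to see the quotient is trivial. The only difference is the presentation used. The paper works directly from the four-generator surgery presentation
\[
\langle x,y,z,h \mid h \text{ central},\ x^{p} h^{p'} = y^{q} h^{q'} = z^{r} h^{r'} = xyzh^e = 1\rangle,
\]
identifies the singular fiber of multiplicity $p$ as $x^a h^b$ with $\lvert bp - ap'\rvert = 1$, and observes that killing it (together with the relation $x^p h^{p'}=1$) forces $x=h=1$, reducing to $\langle y,z \mid y^q=z^r=yz=1\rangle$, which is trivial since $\gcd(q,r)=1$. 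You instead invoke the Milnor presentation $\langle a,b,c \mid a^p=b^q=c^r=abc\rangle$ in which the singular fibers appear as the generators themselves, so killing $a$ immediately gives $h=1$ and the same cyclic-group computation. Your final paragraph, with $(p,p')$ and $(\alpha,\alpha')$ spanning the lattice, is exactly the paper's argument; note that the determinant condition $\lvert \alpha' p - \alpha p'\rvert = 1$ (not merely $\gcd(p,\alpha)=1$) is what is actually used to conclude $x=h=1$.
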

\begin{proof}
Recall that if $\Sigma(p,q,r) = S^2(e;(p,p'), (q,q'), (r,r'))$, then
\begin{equation} \label{eq:brieskorn-pi1}
\pi_1(\Sigma(p,q,r)) = \langle x,y,z,h \mid h \text{ central}, \, x^{p} h^{p'} = y^{q} h^{q'} = z^{r} h^{r'} = xyzh^e = 1\rangle.
\end{equation}
To see this presentation, we consider the standard surgery description for $\Sigma(p,q,r)$ as in Figure \ref{fig:seifert-pi1}.  The complement of the surgery link $L$ has
\[
\pi_1(S^3 - L) = \langle x, y, z, h \mid h \text{ central} \rangle.
\]
Here, $x,y,z$ represent meridians of the three parallel curves while $h$ represents the fiber direction.  The four additional relators in \eqref{eq:brieskorn-pi1} represent the longitudes filled by the Dehn surgeries.


Without loss of generality, we consider the singular fiber of order $p$, which is the core of the Dehn surgery on the leftmost component in Figure \ref{fig:seifert-pi1}. This curve is represented in $\pi_1(\Sigma(p,q,r))$ by $x^a h^b$, where $a,b$ are any integers such that $\abs{bp - ap'} = 1$. Thus, we must show that the quotient $G = \pi_1(\Sigma(p,q,r)) / \langle\! \langle x^a h^b \rangle \!\rangle$ is trivial. Because $x$ and $h$ commute and $\abs{bp-ap'} = 1$, the subgroup of $G$ generated by $x$ and $h$ is the same as the subgroup generated by $x^a h^b$ and $x^p h^{p'}$.  Therefore, $x = h = 1$ in $G$, so
\[
G \cong \langle y, z \mid y^q = z^r = yz = 1\rangle.
\]
Since $q$ and $r$ are relatively prime, this implies that $G$ is the trivial group.  Consequently, the singular fibers normally generate the fundamental group of $\Sigma(p,q,r)$.
\end{proof}

The following proposition now establishes Theorem~\ref{thm:spineless}; specifically, it shows that
the manifolds $W_p$ are spineless for $p \not\in \{-2,-1,0\}$. (Both $W_{-1}$ and $W_0$ contain
spines since they are obtained by attaching a $2$-handle to the $4$-ball; we do not know whether
$W_{-2}$ has a spine.)
\begin{proposition}
If $M_p$ bounds a compact, smooth, oriented $4$-manifold $X$ with $H_*(X) \cong H_*(S^2)$ in which
a generator of $H_2(X)$ can be represented by a PL $2$-sphere, then $p \in \{-2,-1,0\}$.
\end{proposition}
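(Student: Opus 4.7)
The plan is to apply Theorem~\ref{thm:obstruction} to $X$ and reduce \eqref{eq:obstruction} to a small system of equations in $p$ by combining the connected-sum formula for $d$-invariants with the explicit values \eqref{eq:d(Qm)}. The key point is that $Y_p$ is an integer homology sphere, so $d(Y_p)$ is a fixed rational number that cancels in every difference $d(M_p,\spincs_i) - d(M_p,\spincs_{i+1})$. In particular, the argument never requires us to compute $d(Y_p)$.

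First I would pin down the self-intersection $n$ of a generator of $H_2(X)$. Since $Y_p$ is a $\Z$-homology sphere, $|H_1(M_p)| = |H_1(Q_{-4p-3})| = 4$, forcing $|n| = 4$. To rule out $n = -4$, I would apply Theorem~\ref{thm:obstruction} to $-X$, whose boundary is $-M_p \cong Q_{4p+3}\conn Y_p$, and compare the mod-$2\Z$ fractional parts of $d(-M_p,\spincs_i)$ predicted by \eqref{eq:d-mod2} with those computed directly from \eqref{eq:d(Qm)} together with $d(Y_p)\in 2\Z$: the predicted multiset is $\{3/4,0,3/4,0\}$ while the actual one is $\{1/4,1/4,0,0\}$, excluding $n=-4$. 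So $n=4$.

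With $n=4$ fixed, another application of \eqref{eq:d-mod2} identifies the labeling: the self-conjugate structures $\spincs_0$ and $\spincs_2$ on $M_p$ must correspond to the two self-conjugate spin$^c$ structures on $Q_{-4p-3}$, whose $d$-values from \eqref{eq:d(Qm)} are $-\frac{4p+1}{4}$ and $-\frac{4p+5}{4}$ in some order, while $\spincs_1$ and $\spincs_3$ correspond to the conjugate pair with $d=0$. Substituting into \eqref{eq:obstruction} for each $i\in\{0,1,2,3\}$ then produces four linear equations in $p$, each forced into an explicit two-element set. I would split into the two cases matching $\{\spincs_0,\spincs_2\}$ against $\{-\frac{4p+1}{4}, -\frac{4p+5}{4}\}$: one assignment isolates $p = -1$ and the other isolates $p \in \{-2, 0\}$, so in either case $p \in \{-2, -1, 0\}$.

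The main obstacle is the bookkeeping in the first two steps — fixing the orientation of $X$ and identifying spin$^c$ labels via \eqref{eq:d-mod2} — rather than the final arithmetic, which reduces to checking a handful of linear equations with the two admissible right-hand sides prescribed by \eqref{eq:obstruction}.
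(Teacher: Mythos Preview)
Your approach is correct and essentially identical to the paper's: use \eqref{eq:d-mod2} together with \eqref{eq:d(Qm)} to force $n=4$ and identify the spin$^c$ labels, then feed the resulting $d$-invariant differences into \eqref{eq:obstruction} to solve for $p$. The only cosmetic difference is that the paper resolves the $\spincs_0$/$\spincs_2$ ambiguity via the parity of $p$ (odd $p$ forces one assignment and yields $p=-1$; even $p$ forces the other and yields $p\in\{-2,0\}$) rather than by checking both assignments as you do; note also a small arithmetic slip in your exclusion of $n=-4$ --- the correct mod-$2\Z$ multisets are $\{\tfrac34,0,\tfrac74,0\}$ (predicted) versus $\{\tfrac54,\tfrac14,0,0\}$ (actual for $-M_p$) --- but the mismatch, and hence your conclusion, survives.
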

\begin{proof}
Suppose $M_p$ bounds a compact, smooth, oriented $4$-manifold $X$ with $H_*(X) \cong H_*(S^2)$.
Observe that the four $d$ invariants of $M_p$ are equal to those of $Q_{-4p-3}$ minus the even
integer $d(Y_p)$. To be precise, label the four spin$^c$ structures on $M_p$ by $\spincs_0, \dots,
\spincs_3$ according to Convention \ref{conv:spinc}. By \eqref{eq:d-mod2}, we deduce that the
intersection form of $X$ must be positive-definite, and
\[
d(M_p, \spincs_0) \equiv \frac34, \quad d(M_p, \spincs_1) = d(M_p, \spincs_3) \equiv 0, \quad
d(M_p, \spincs_2) \equiv \frac74 \pmod{2\Z}.
\]
(If the intersection form were negative-definite, the $d$ invariants of $\spincs_0$ and $\spincs_2$
would be congruent to $\frac54$ and $\frac14$ respectively, which would violate \eqref{eq:d(Qm)}.)
These congruences enable us to identify which of the two self-conjugate spin$^c$
structures is $\spincs_0$ and which is $\spincs_2$. Specifically, when $p$ is odd, we have
\begin{align*}
d(M_p, \spincs_0) &= -d(Y_p) - \frac{4p+1}{4}  \\
d(M_p, \spincs_1) = d(M_p, \spincs_3) &= -d(Y_p) \\
d(M_p, \spincs_2) &= -d(Y_p) - \frac{4p+5}{4}.
\end{align*}
By Theorem \ref{thm:obstruction}, if there is a PL sphere representing a generator of
$H_2(X)$, then:
\begin{align*}
-\frac{4p+1}{4} = d(M_p, \spincs_0) - d(M_p, \spincs_1) &= \frac34 \text{ or } {-\frac54} \\
\frac{4p+5}{4} = d(M_p, \spincs_1) - d(M_p, \spincs_2) &= \frac14 \text{ or } {-\frac74}
\end{align*}
These two equations imply that $p=-1$.

Similarly, when $p$ is even, the roles of $\spincs_0$ and $\spincs_2$ are exchanged, and we deduce
that $p$ equals either $-2$ or $0$.
\end{proof}

\begin{remark}
In \cite{Doig}, Doig computed the $d$ invariants of $Q_m$ and used these to show that many of the $Q_m$ cannot be obtained by surgery on a knot in $S^3$.  Our arguments further show that $Q_m$ cannot be integrally homology cobordant to surgery on a knot. While Doig's arguments use $d$ invariants, which are homology cobordism invariants, they also rely on the fact that the $Q_m$ are L-spaces, which is not a property that is preserved under homology cobordism.
\end{remark}

\begin{remark}
For any $k>1$, one can modify the construction above to obtain spineless $4$-manifolds $X$ with
$H_1(\partial X) \cong \Z/k^2$. Let $Q_{k,m}$ be the manifold obtained by $(0,m+k)$ surgery on the
$(2,2k)$ torus link. (Using our previous notation, $Q_m = Q_{2,m}$, as seen in Figure
\ref{subfig:Qm-T24}.) Then $\abs{H^2(Q_{k,m})} = k^2$, and $H^2(Q_{k,m})$ is cyclic iff
$\gcd(k,m)=1$. Since $Q_{k,m}$ bounds a rational homology ball, the $d$ invariants of $k$ of the
$k^2$ spin$^c$ structures on $Q_{k,m}$ vanish. On the other hand, the exact triangle relating the
Heegaard Floer homologies of $S^1 \times S^2$, $Q_{k,m}$, and $Q_{k,m+1}$ shows that the $d$
invariants of the remaining spin$^c$ structures vary roughly linearly in $m$. In particular, the
differences between $d$ invariants of adjacent spin$^c$ structures can be arbitrarily large.
Moreover, one can realize $Q_{k,m}$ (for appropriate $m$) as surgery on a fiber in a Brieskorn
sphere; the result then follows as above.

We do not know of any instances where Theorem \ref{thm:obstruction} obstructs the existence of a PL sphere when $n$ is not a perfect square.
\end{remark}

\bibliographystyle{amsalpha}
\bibliography{bib}

\end{document}